		\newlength\mylen
		\newcolumntype{C}{>{\hfil$}p{\mylen}<{$\hfil}}
\newcommand{\pair}[1]{\left\langle #1\right\rangle}
\newcommand{\tate}[1]{\left\langle #1\right\rangle}
\newcommand{\ldef}[1]{\textcolor{purple}{\textit{#1}}}
\DeclareMathOperator{\sign}{sign}
\DeclareMathOperator{\Tilt}{Tilt}
\DeclareMathOperator{\Proj}{Proj}
\DeclareMathOperator{\Inj}{Inj}
\DeclareMathOperator{\Perv}{Perv}
\DeclareMathOperator{\mix}{mix}
\DeclareMathOperator{\Bim}{Bim}
\DeclareMathOperator{\Mod}{Mod}
\DeclareMathOperator{\SBim}{\mathbb{S}Bim}
\DeclareMathOperator{\BSBim}{\mathbb{BS}Bim}
\DeclareMathOperator{\SMod}{\mathbb{S}Mod}
\DeclareMathOperator{\R}{\mathbb{R}}
\DeclareMathOperator{\C}{\mathbb{C}}
\DeclareMathOperator{\Q}{\mathbb{Q}}
\DeclareMathOperator{\Z}{\mathbb{Z}}
\DeclareMathOperator{\bV}{\mathbb{V}}
\DeclareMathOperator{\h}{\mathfrak{h}}
\DeclareMathOperator{\tA}{\textbf{A}}
\DeclareMathOperator{\tB}{\textbf{B}}
\DeclareMathOperator{\tC}{\textbf{C}}
\DeclareMathOperator{\tD}{\textbf{D}}
\DeclareMathOperator{\tE}{\textbf{E}}
\DeclareMathOperator{\tF}{\textbf{F}}
\DeclareMathOperator{\tH}{\textbf{H}}
\DeclareMathOperator{\tI}{\textbf{I}}
\DeclareMathOperator{\tR}{\textbf{R}}
\DeclareMathOperator{\Hom}{Hom}
\DeclareMathOperator{\grrk}{rk^{\bullet}}
\DeclareMathOperator{\id}{id}
\DeclareMathOperator{\Sym}{Sym}
\DeclareMathOperator{\alphac}{\check{\alpha}}
\newtheorem{thm}{Theorem}
\newtheorem{lem}[thm]{Lemma}
\newtheorem{prop}[thm]{Proposition}
\theoremstyle{remark}
\newtheorem{rem}[thm]{Remark}
\newtheorem{exmp}[thm]{Example}
\title{On the coefficients in the Jones-Wenzl idempotent}
\author{J. Baine}
\date{\emph{In memory of Greg Wiley}}
\begin{document}

	\maketitle
	\begin{abstract}
		\noindent
		By studying a categorification of the antisymmetriser quasi-idempotent in the Hecke algebra, we derive a closed formula for the Jones-Wenzl idempotent in the Temperley-Lieb algebra. 
		In particular, we show that when the idempotent is expressed in terms of the monomial basis, the coefficients are the graded ranks of certain indecomposable Soergel modules.
		Equivalently, the coefficients can be expressed as a ratio of certain Kazhdan-Lusztig polynomials. 
		Similar results are obtained for generalised Jones-Wenzl idempotents in other types. 
	\end{abstract}
	
	\noindent
	The Jones-Wenzl idempotent $j_n$ is an element of the Temperley-Lieb algebra  which projects onto the trivial submodule. 
	Its importance stems from its ubiquity in mathematics: 
	in knot theory it is used to compute the coloured Jones polynomial of a knot; 
	in representation theory it arises in the endomorphism algebra of tensor powers of the natural representations of $SL_2(\C)$ and $U_{q}(\mathfrak{sl}_2)$; and, 
	in Soergel bimodule theory it appears in the defining relations of the Bott-Samelson category. 
	\\
	\par 
	Given its ubiquity, a natural question first posed by V. Jones is to determine a formula for $j_n$. 
	Wenzl famously determined a recursive relation in \cite{Wen87}, with a second recursive relation being determined in \cite{FK97}. 
	However closed formulas remained elusive; indeed, Ocneanu writes ``The general opinion among mathematicians and physicists, who had been searching for such a formula for applications in quantum field theory, appeared to be that such a closed formula might not exist in general.''  
	Ocneanu announced a closed formula in \cite{Ocn02}, which has been proven in very limited cases \cite{Rez07}, and Morrison determined an algorithm to compute coefficients in \cite{Mor17}. 
	\\
	\par 
	The main result of this note is the following non-recursive formula for $j_n$ in terms of Kazhdan-Lusztig polynomials. 
	Moreover, the coefficients arise naturally as ratios of the graded ranks of certain indecomposable Soergel modules.  
	
	\begin{thm}
	\label{Thm: JW type A}
		Let $W_{FC}$ be the set of fully-commutative elements in Type $\tA_{n-1}$, and $u_x \in TL_n$ the corresponding monomial (i.e. diagrammatic) basis element.
		For any $x \in W_{FC}$, the coefficient of $u_x$ in $j_n$ is 
		\begin{align*}
			\frac{(-1)^{\ell(x)}}{[n]!} \sum_{y} v^{-\ell(y)} h_{y,xw_0}
		\end{align*}
		where $h_{y, xw_0}$ is a Kazhdan-Lusztig polynomial and $[n]!$ is the $n$-th quantum factorial. 
		Equivalently, $j_n$ admits an expression in terms of the graded ranks of Soergel modules as 
		\begin{align*}
			j_n = 
			\sum_{x \in W_{FC}} 
			(-1)^{\ell(x)}
			~\frac{\grrk \check{B}_{ x w_0}}{\grrk \check{B}_{w_0}}
			~u_x.
		\end{align*}  
	\end{thm}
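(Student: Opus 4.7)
The plan is to realise $j_n$ as the image of a normalised quasi-idempotent in the Hecke algebra $H_n$ of type $\tA_{n-1}$ under the canonical surjection $\pi: H_n \twoheadrightarrow TL_n$, and then to extract its expansion in the monomial basis via the Soergel categorification of $H_n$.

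First I would identify $j_n$ with $\frac{1}{[n]!}\,\pi(b_{w_0})$, where $b_{w_0}$ is the Kazhdan--Lusztig basis element at the longest element. This rests on two classical inputs: the identity $b_{w_0}^2 = [n]!\,b_{w_0}$, which makes $\frac{1}{[n]!}b_{w_0}$ the primitive idempotent for the spherical character $H_s \mapsto v^{-1}$ of $H_n$; and the fact that this idempotent descends under $\pi$ to the idempotent for the one-dimensional trivial cell module of $TL_n$, which is exactly $j_n$. Soergel's character formula identifies the constant $[n]!$ with $\grrk \check{B}_{w_0}$, producing the denominator of the theorem.

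The heart of the argument is the expansion of $\pi(b_{w_0})$ in the basis $\{u_x\}_{x \in W_{FC}}$. Writing $H_s = v^{-1} - e_s$ with $e_s$ the Temperley--Lieb generator, one computes $\pi(H_x) = (-1)^{\ell(x)} u_x + (\text{lower FC terms})$ for $x \in W_{FC}$, which accounts for the overall sign in the final formula. To extract the full coefficient I would pass to the categorification: $b_{w_0}$ lifts to the indecomposable Soergel bimodule $B_{w_0}$, and $\pi$ is categorified by the functor $B \mapsto \check{B} := B \otimes_R \Bbbk$ (equivalently, by the antispherical quotient of $\SBim$ attached to $TL_n$). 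Under this categorification, the coefficient of $u_x$ in $\pi(b_{w_0})$ is identified, up to the sign $(-1)^{\ell(x)}$, with the graded multiplicity in $\check{B}_{w_0}$ of a standard object indexed by $xw_0$; Soergel's formula evaluates this as $\grrk \check{B}_{xw_0} = \sum_y v^{-\ell(y)} h_{y, xw_0}$.

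The principal obstacle is establishing this categorical identification rigorously: constructing a standard filtration of $\check{B}_{w_0}$ indexed by $W_{FC}$ whose $x$-th subquotient is (a graded shift of) $\check{B}_{xw_0}$ and which is compatible with the $TL_n$-action. I would attempt this either via a direct light-leaves computation on a Bott--Samelson resolution of $w_0$, or by first proving an identity of the shape
\[
	b_{w_0} \;=\; \sum_{x \in W_{FC}} (-1)^{\ell(x)}\, \grrk \check{B}_{xw_0}\; \tilde{H}_x \;+\; (\text{terms in } \ker \pi),
\]
where $\tilde{H}_x$ is a suitable standard lift of $u_x$, by induction on $\ell(x)$ using recursive properties of Kazhdan--Lusztig polynomials and then lifting the identity to the Soergel category. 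The equivalence of the two forms of the coefficient in the theorem is then immediate from the definition $\grrk \check{B}_w = \sum_y v^{-\ell(y)} h_{y, w}$.
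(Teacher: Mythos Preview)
Your first step contains a fatal error: the Jones--Wenzl idempotent satisfies $j_n u_i = 0$, and since $\pi(b_{s_i}) = u_i$ this forces the preimage in $H$ to annihilate each $b_s$, i.e.\ to be the \emph{sign} idempotent $e_{\sign}$, not the spherical one. The element $b_{w_0}$ is (a scalar multiple of) the \emph{trivial} quasi-idempotent, satisfying $b_s b_{w_0} = (v+v^{-1})b_{w_0}$; moreover $w_0 \notin W_{FC}$ for $n \geq 3$, so the Fan--Green theorem gives $\pi(b_{w_0}) = 0$. Hence $\tfrac{1}{[n]!}\pi(b_{w_0}) = 0 \neq j_n$, and the rest of your strategy collapses. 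Your proposed categorification of $\pi$ is also off: the functor $B \mapsto B \otimes_R \Bbbk$ passes from Soergel bimodules to Soergel modules and still categorifies all of $H$, not the Temperley--Lieb quotient; and $\check{B}_{w_0}$ has no filtration with subquotients $\check{B}_{xw_0}$ indexed by $W_{FC}$.

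The paper replaces $b_{w_0}$ by the correct quasi-idempotent $[T_{w_0}]$, where $T_{w_0}$ is the big \emph{tilting} object in the mixed perverse category $\Perv(\h)$. Its $\Delta$-filtration multiplicities show $[T_{w_0}]\delta_s = -v[T_{w_0}]$, so $e_{\sign}$ is a scalar multiple of $[T_{w_0}]$. The coefficients of $b_x$ in $[T_{w_0}]$ are then read off as Jordan--H\"older multiplicities $[T_{w_0}:L_x\tate{i}]$ (using $[L_x] = b_x$ from Soergel's conjecture), and these are computed as $\grrk \check{B}_{xw_0}$ via Ringel and Koszul duality together with Soergel's functor $\bV^{\bullet}$. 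Finally, the Fan--Green theorem --- $\pi(b_x) = u_x$ for $x \in W_{FC}$ and $\pi(b_x)=0$ otherwise --- converts the Kazhdan--Lusztig expansion of $e_{\sign}$ directly into the monomial-basis expansion of $j_n$, with no need for any categorification of $\pi$ or inductive filtration argument.
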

	
	More generally, in Theorem \ref{Thm: gen JW} we show that similar identities hold for the generalised Jones-Wenzl idempotents considered in \cite{Sen19}.
	\\
	\par 
	Obviously, no closed formula is currently known for Kazhdan-Lusztig polynomials.
	So, the reader may argue that Theorem \ref{Thm: JW type A} is not closed.
	Alternatively, this formula is the simplest we know; it explains the difficulties encountered by others in finding a closed form; and, it elucidates that these coefficients are of deep mathematical significance. 
	\\
	\par 
	We begin by introducing various Hecke categories and their associated functors in sections \ref{Sec: realisations} - \ref{Sec: functors}. 
	We then commence a study of Jordan-H\"{o}lder multiplicities of the big tilting object $T_{w_0}$ in sections \ref{Sec: ranks} - \ref{Sec:JH mult}.
	In section \ref{Sec: Hecke algebras} we show that $T_{w_0}$ categorifies the antisymmetriser quasi-idempotent in the Hecke algebra. 
	Finally, we exploit the properties of IC-bases of (generalised) Temperley-Lieb algebras to deduce the formulas for (generalised) Jones-Wenzl idempotents in sections \ref{Sec: TL} and \ref{Sec: generalised}.

\section{Realisations of Coxeter systems}
\label{Sec: realisations}
	
	We will consider various categories whose split Grothendieck groups are isomorphic to Hecke algebras. 
	The data required to construct these categories is a realisation, in the sense of \cite{EW16}. 
	The realisations we consider are: the root realisations of Weyl groups, so that we have recourse to geometry; and, geometric realisations so that we can extend our results to all finite Coxeter groups.
	\\
	\par 
	Let $(W,S)$ be a Coxeter system with length function $\ell$ and Bruhat order $\leq$.
	Further assume $W$ is finite with longest element $w_0$. 
	For any simple reflections $s,t \in S$ let $m_{st} \in \Z_{\geq 0}$ denote the order of $st$.
	\\
	\par 
	A \ldef{realisation} of a Coxeter system $(W,S)$ is a triple $(\h , \{ \alphac_s \} , \{ \alpha_s \})$ consisting of: a finite-rank, free $\Bbbk$-module $\h$; a collection of elements $\{ \alphac_s \} \subset \h$; the dual space $\h^{*} := \Hom_{\Bbbk}(\h, \Bbbk)$; and, a collection of elements $\{ \alpha_s \} \subset \h^*$, all of which satisfy: $\pair{\alphac_s, \alpha_s} =2 $ for all $s \in S$; the assignment $s(\lambda) = \lambda - \pair{\lambda, \alpha_s}\alphac_s$ for each $s \in S$ and $\lambda \in \h$ defines a $W$-module structure on $\h$; and, a technical condition discussed in \cite[\S 3.1]{EW16} and \cite[\S 2.1]{RV23}.
	We will always take $\Bbbk$ to be a field of characteristic $0$. 
	\\
	\par 
	A realisation is said to be \ldef{reflection faithful} if $W$ acts faithfully on $\h$, and $\text{codim} \h^w =1$ if and only if $w$ is a reflection, i.e. conjugate to $s \in S$. 	\\
	\par 
	If $(W,S)$ is an arbitrary, finite Coxeter system, we define $\h := \bigoplus_{s \in S} \R \alphac_s$, and elements $\{ \alpha_s \vert s \in S \} \subset \h^{*}$ are chosen so that 
	\begin{align*}
		\pair{\alphac_s , \alpha_t} = -2 \cos (\pi / m_{st})
	\end{align*} 
	where $\pair{-,-} : \h \times \h^* \rightarrow \R$ is the natural pairing. 
	The data $(\h, \{ \alphac_s \} , \{ \alpha_s \} )$ is called the \ldef{geometric realisation} of $(W,S)$.
	This realisation is reflection faithful by \cite{Soe07}. 
	\\
	\par 
	If $(X, R, \check{X}, \check{R})$ is the reduced root datum of an algebraic group $G$. 
	Fix a set of simple roots $\{ \alpha_s \} \subset R$, so that the associated simple reflections $S$ in the Weyl group $W$ endow $(W,S)$ with the structure of a Coxeter system. 
	The data $(X, \{ \alphac_s \} , \{ \alpha_s \} ) $ is a realisation of $(W,S)$ over $\Z$. 
	Fix $\Bbbk$, a field of characteristic 0, and set $\h := X \otimes \Bbbk$. 
	Then, $(\h, \{ \alphac_s \} , \{ \alpha_s \} ) $ is a realisation over $\Bbbk$ called the \ldef{root realisation} of the Coxeter system $(W,S)$.
	This realisation is reflection faithful by \cite{Soe90, Soe92}. 
	\\
	\par
	Given a realisation $(\h, \{ \alphac_s \} , \{ \alpha_s \} )$, the data $(\h^*, \{ \alpha_s \} , \{ \alphac_s \} )$ is also a realisation of $(W,S)$ called the \ldef{Langlands dual realisation} of $(W,S)$.
	This nomenclature stems from the fact that the Langlands dual realisation of the root realisation associated to an algebraic group $G$ is the root realisation associated to the Langlands dual algebraic group $\check{G}$.
	\\
	\par 
	Where it will not cause any confusion we abbreviate the data of a realisation $(\h, \{ \alphac_s \} , \{ \alpha \})$ to $\h$, and write $\check{\h}$ for the Langlands dual realisation. 
	
\section{Mixed perverse Hecke categories}
\label{Sec: categories}
	
	In this section we introduce various categories associated to a reflection faithful realisation $\h$. 
	Ultimately, our attention will be focused on indecomposable Soergel modules and indecomposable tilting complexes.
	\\
	\par 
	Fix a Coxeter system $(W,S)$ where $W$ is finite, and a reflection faithful realisation $\h$ defined over a field $\Bbbk$. 
	The reader is welcome to only consider the case of $\Bbbk=\R$, however these constructions hold more generally. 
	\\
	\par
	Denote by $R:= \Sym(\h)$ the symmetric algebra on the $\Bbbk$-vector-space $\h$, where $\alpha \in \h$ is considered as an element in degree 2.
	Let $R$-$\Bim$ denote the category of $\Z$-graded \ldef{$R$-bimodules}. 
	It is endowed with a shift functor $(1)$, and tensoring over $R$, i.e. $- \otimes_R -$, induces a monoidal structure on $R$-$\Bim$. 
	Now consider $R$ as a $\Z$-graded $W$-module by extending the $W$-module structure on $\h$. 
	For any $s \in S$ let $R^s$ denote the subring of invariants under $s$, and define $B_s := R \otimes_{R^s} R (1)$.
	It is easy to check $B_s$ is in $R$-$\Bim$, and $B_s$ is free as a left or right $R$-module. 
	\\
	\par 
	The category of \ldef{Bott-Samelson bimodules} $\BSBim(\h)$ is the full, monoidal subcategory of $R$-$\Bim$ which is monoidally generated by $R$ and $B_s$ for each $s \in S$. 
	Any bimodule in $\BSBim(\h)$ is called a Bott-Samelson bimodule. 
	\\
	\par 	
	The category of \ldef{Soergel bimodules} $\SBim(\h)$ is the  strictly-full subcategory of $R$-$\Bim$ whose objects are finite direct-sums of direct-summands of Bott-Samelson bimodules, and their shifts. 
	It is a Krull-Schmidt, additive, monoidal category with shift functor $(1)$. 
	For any $B,B'$ in $\SBim(\h)$ we define 
	\begin{align*}
		\Hom_{\SBim(\h)}^{\bullet} (B,B') 
		:= 
		\bigoplus_{k \in \Z} \Hom_{\SBim(\h)}(B,B'(k)) 
	\end{align*}
	which is a graded $R$-bimodule, and is free as a graded, left $R$-module \cite[\S5]{Soe07} . 
	For each $x \in W$ and $n \in \Z$ there is an indecomposable object $B_x(n)$, and each indecomposable object is isomorphic to an object of this form. 
	Moreover, if $s_1 \dots s_k$ is a reduced expression for $x$, i.e. $x= s_1 \dots s_k$ and $\ell(x)=k$, then the $B_x$ occurs as a direct summand of $B_{s_1} \otimes_R \dots \otimes_R B_{s_k}$ with multiplicity 1, and does not occur as a summand of any Bott-Samelson bimodule with fewer tensor factors \cite[\S6]{Soe07}. 
	\\
	\par 	
	Let $\Mod$-$R$ denote the category of graded, right $R$-modules. 
	Endowing $\Bbbk$ with a trivial $R$-module structure induces a functor $\Bbbk \otimes_R (-) : R\text{-}\Bim \rightarrow \Mod\text{-}R$. 
	The category of right \ldef{Soergel modules} $\SMod(\h)$ is the essential image of the restriction of $\Bbbk \otimes_R (-) $ to $\SBim(\h)$. 
	It is a Krull-Schmidt, additive category with shift functor $(1)$. 
	For any $B,B'$ in $\SMod(\h)$ we define 
	\begin{align*}
		\Hom_{\SMod(\h)}^{\bullet} (B,B') 
		:= 
		\bigoplus_{k \in \Z} \Hom_{\SMod(\h)}(B,B'(k)). 
	\end{align*}
	which is free as a graded, left $\Bbbk$-module \cite[\S 1.7]{Ric19}. 
	For each $x \in W$ and $n \in \Z$ the module $\Bbbk \otimes_{R} B_x(n)$ is indecomposable \cite[\S1.7]{Ric19}.  
	We will abuse notation and write $B_x(n)$ for the corresponding indecomposable modules in $\SMod(\h)$. 
	\\
	\par 
	The \ldef{mixed derived Hecke category} $D^{\mix}(\h):= K^b(\SMod(\h))$ is the bounded homotopy category of $\SMod(\h)$. 
	It is a triangulated category, with two shift-functors: $(1)$ inherited from $\SMod(\h)$, and $[1]$ from cohomological shift.
	We define $\tate{1} = (-1)[1]$. 
	By an abuse of notation, we denote the complex consisting exclusively of the Soergel module $B_x$ in cohomological degree $0$ by $B_x$. 
	In \cite{EW14,ARV20} it is shown that $D^{\mix}(\h)$ can be endowed with a canonical perverse $t$-structure.
	We do not require the precise definition of the $t$-structure. 
	\\
	\par 
	The \ldef{mixed perverse Hecke category} $\Perv(\h)$ is defined to be the heart of the perverse $t$-structure on $D^{\mix}(\h)$.
	It is a graded highest weight category, in the sense of \cite[Appendix A]{AR16b}, with shift-functor $\tate{1}$ inherited from $D^{\mix}(\h)$ \cite[\S9.5]{ARV20}.
	For any $A, A'$ in $\Perv(\h)$ we define 
	\begin{align*}
		\Hom_{\Perv(\h)}^{\bullet} (A,A') 
		:= 
		\bigoplus_{k \in \Z} \Hom_{\Perv(\h)}(A,A'\tate{k}). 
	\end{align*}
	For each $x \in W$ and $n \in \Z$ there are complexes $L_x \tate{n}, \Delta_x \tate{n}, \nabla_x \tate{n}, T_x\tate{n}$, which are simple, standard, costandard and tilting objects respectively. 
	These exhaust all isomorphism classes of objects of these types of objects.  
	Since we assume $W$ is finite, the category $\Perv(\h)$ has enough injectives and projectives. 
	Denote by $I_x$ (resp. $P_x$) an injective envelope (resp. projective cover) of $L_x$. 
	Note that in general the complex $B_x$ need not be perverse. 
	\\
	\par
	Fix a reflection faithful realisation $\h$ and construct the categories $\SBim(\h)$, $\SMod(\h)$, $D^{\mix}(\h)$, and $\Perv(\h)$.
	To utilise the powerful tool that is Koszul duality, we require a slight modification of the corresponding categories for the Langlands dual realisation $\check{\h}$. 
	The category $\SBim(\check{\h})$ is defined completely analogously to above. 
	The category $\SMod(\check{\h})$ is now the category of \textit{left} Soergel modules. 
	That is, $(-)\otimes_{\check{R}} \Bbbk$ induces a functor from $\check{R}$-$\Bim$ to $\check{R}$-$\Mod$, the category of left $\check{R}$-modules. 
	 Then $\SMod(\check{\h})$ is defined as the essential image of $\SBim(\check{\h})$ under this functor.
	 With this modification $D^{\mix}(\check{\h})$ and $\Perv(\check{\h})$ are defined completely analogously to above. 

\section{Functors on Hecke categories}
\label{Sec: functors}
	
	Mixed Hecke categories are endowed with various well-known dualities and functors. 
	We briefly recall these functors, as they will be utilised in Section \ref{Sec:JH mult}.  
	\\
	\par	
	Since we assume $W$ is finite with longest element $w_0$, the category $D^{\mix}(\h)$ admits an autoequivalence $\tR$, called  \ldef{Ringel duality}. 
	The construction of $\tR$ is not relevant to our purposes, see \cite[\S 10.1]{ARV20} for details. 
	Our interest lies in the following fact. 
	If we set $\Tilt(\h)$, $\Inj(\h)$ and $\Proj(\h)$ to be the full, additive subcategories of tilting, injective and projective objects respectively, then Ringel duality induces equivalences which satify:
	\begin{align*}
		\Inj(\h) \tilde{\longrightarrow} \Tilt(\h)~ \tilde{\longrightarrow} ~\Proj(\h)
		\\
		I_{xw_0}\tate{n} \longmapsto T_x \tate{n} \longmapsto P_{xw_0}\tate{n}
	\end{align*}
	for each $x \in W$, see \cite[\S 10.2]{ARV20}. 
	\\
	\par
	The second duality we exploit is the considerably deeper equivalence $\kappa$ called \ldef{Koszul duality}.
	The version we consider, is that constructed in \cite{AMRW19, RV23}. 
	Namely, we have an equivalence which satisfies (among other properties):
	\begin{align*}
		D^{\mix} (\h) &\tilde{\longrightarrow} D^{\mix} (\check{\h})
		\\
		T_x &\longmapsto \check{B}_x
	\end{align*}
	for every $x \in W$ and $n \in \Z$, and there is a natural isomorphism $\kappa \circ \tate{n} \cong (n) \circ \kappa$. 
	Note the presence of the Langlands dual realisation in Koszul duality, and the change in grading. 
	\\
	\par 
	Classically, Soergel's functor $\bV$ is defined as the functor induced by taking homomorphisms from the big projective in category $\mathcal{O}$, \cite{Soe90}.
	In $\Perv(\h)$ and $D^{\mix}(\h)$ we have the following isomorphisms
	\begin{align}
	\label{Eqn: tilt proj iso}
		I_{\id}\tate{-\ell(w_0)} \cong T_{w_0} \cong P_{\id} \tate{\ell(w_0)}
	\end{align}
	due to \cite[\S10.3]{ARV20}. 
	This suggests the following graded analogue of \ldef{Soergel's functor}
	\begin{align*}
		\bV^{\bullet}:= \Hom_{D^{\mix}(\h)}^{\bullet}(T_{w_0}\tate{-\ell(w_0)},-)
	\end{align*}
	If $\h$ is a root realisation we have, by \cite[\S3.6]{AMRW19}, a graded analogue of Soergel's Struktursatz. 
	Namely, $\bV^{\bullet}$ induces an equivalence of additive categories
	\begin{align*}
		\Tilt(\h) \tilde{\longrightarrow} \SMod(\check{\h})
	\end{align*}
	where, for each $x \in W$, we have $\bV^{\bullet}(T_{x}) \cong \check{B}_x$.
	
\section{Graded ranks of Soergel modules}
\label{Sec: ranks}

	We now fix notation for graded modules, and express the graded ranks of Soergel (bi)modules in terms of the graded multiplicities of standard bimodules in the bimodule $B_x$. 
	\\
	\par 
	Given a commutative ring $A$, and a graded, free $A$-module $M \cong \bigoplus_{i \in \Z} M^i$ we define the graded rank of $M$, as an $A$-module, as
	\begin{align*}
		\text{rk}_A^{\bullet} M 
		= 
		\sum_{i \in \Z} \text{rk}_A M^i ~v^i 
		\in 
		\Z[v,v^{-1}].
	\end{align*}
	When $A$ is clear from context we omit it from notation, simply writing $\grrk M$ instead. 
	\\
	\par
	For any $x \in W$, define the \ldef{standard bimodule} $R_x$ in $R$-$\Bim$ to be the $R$-bimodule where $R_x \cong R$ as left $R$-modules and the right action is given by $m \cdot r = m(x(r))$ for any $m \in R_x$ and $r \in R$, i.e. the right action is twisted by $x$. 
	Fix an enumeration $x_0 , \dots, x_{k}$ of elements in $W$ which refines the Bruhat order, i.e. $x_i < x_j$ implies $i<j$.
	An $R$-bimodule $B$ is said to have a \ldef{standard filtration}, relative to the enumeration, if it has a filtration $0 = B_j \subset \dots \subset  B_0 = B$ satisfying $B_i/B_{i-1} \cong \bigoplus_{n \in \Z} R_{x_i}(n)^{\oplus m_{i,n}}$.
	Given an $R$-bimodule $B$ with standard filtration we write $h_{x_i}(B)= \sum_{n \in \Z} m_{i,n} v^n $ for the graded multiplicities of $R_{x_i}$ in this filtration.   
	\\
	\par 
	It was shown in \cite{Soe07} that after fixing a choice of enumeration any Soergel bimodule $B$ admits a unique standard filtration, and the graded multiplicity $h_y(B)$ is independent of the choice of enumeration. 
	Consequently for any Soergel bimodule $B$ we have
	\begin{align*}
		\grrk B = \sum_{y \in W} h_y(B). 
	\end{align*}
	It follow that the graded rank of the Soergel  module $\Bbbk \otimes_R B$ is  $\text{rk}^{\bullet}_{\Bbbk}   (\Bbbk \otimes_R B) = \text{rk}^{\bullet}_R B = \sum_y h_y (B)$.  
	\\
	\par 
	For each of the realisations we consider, the category of Soergel (bi)modules satisfies Soergel's conjecture, see \cite{Soe90, EW14}. This is equivalent to the statement that for all $x,y \in W$ we have $h_y(B_x) = v^{-\ell(y)} h_{y,x} $, where $h_{y,x}$ is the classical Kazhdan-Lusztig polynomial (in the normalisation of \cite{Soe97}). 
	Thus, we have
	\begin{align*}
		\grrk B_x = \grrk \check{B}_x = \sum_y  v^{-\ell(y)} h_{y,x}. 
	\end{align*} 
	
	\begin{rem}
		More generally, for any reflection faithful realisation $\h$ the multiplicity $h_y(B_x)$ will be $v^{-\ell(y)} ~{}^ph_{y,x}$, where ${}^p h_{y,x}$ is the $p$-Kazhdan-Lusztig polynomial introduced in \cite{JW17}. 
	\end{rem}
	
	\par
	We conclude by observing that $\grrk B_x$ satisfies a parity property. 
	\begin{lem}
	\label{Lem: parity}
		For any $x \in W$ we have $\grrk B_x \in v^{\ell(x)}\Z[v^{-2}]$.
	\end{lem}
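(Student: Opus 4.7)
The plan is to unpack the formula displayed immediately before the lemma, namely $\grrk B_x = \sum_{y \leq x} v^{-\ell(y)} h_{y,x}$, and to invoke two standard properties of the Kazhdan-Lusztig polynomials in Soergel's normalisation. The first is the degree bound $\deg_v h_{y,x} \leq \ell(x) - \ell(y)$, and the second is the parity property that every monomial of $h_{y,x}$ has $v$-degree congruent to $\ell(x) - \ell(y) \pmod 2$. Both are immediate from the relation $h_{y,x}(v) = v^{\ell(x)-\ell(y)} P_{y,x}(v^{-2})$ with the classical (Lusztig-normalised) polynomial $P_{y,x}$, or equivalently from the inductive characterisation of the self-dual basis $b_x$ in the Hecke algebra.

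The first step is a single-summand calculation: multiplying by $v^{-\ell(y)}$, the degree bound becomes $\deg_v(v^{-\ell(y)} h_{y,x}) \leq \ell(x) - 2\ell(y) \leq \ell(x)$, using only that $\ell(y) \geq 0$, and the parity of every monomial becomes $\ell(x) - \ell(y) - \ell(y) \equiv \ell(x) \pmod 2$, which is crucially independent of $y$. Together these assert that $v^{-\ell(y)} h_{y,x} \in v^{\ell(x)} \Z[v^{-2}]$ for every $y \leq x$.

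The second step is to note that $v^{\ell(x)} \Z[v^{-2}]$ is closed under sums, so summing over $y \leq x$ gives $\grrk B_x \in v^{\ell(x)} \Z[v^{-2}]$, as required. There is no real obstacle to overcome: the lemma is essentially a bookkeeping consequence of the parity property and degree bound on Kazhdan-Lusztig polynomials. The only point worth emphasising is that the parity cancels uniformly in $y$ after the shift by $v^{-\ell(y)}$, which is why the sum remains within a single parity class; were the parity of $h_{y,x}$ independent of $\ell(y)$, the statement would fail.
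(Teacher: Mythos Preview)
Your proof is correct and takes a different route from the paper's. The paper argues directly at the level of bimodules: the Bott--Samelson bimodule $BS$ for a reduced expression of $x$ has $\grrk BS = (v+v^{-1})^{\ell(x)} \in v^{\ell(x)}\Z[v^{-2}]$, and since $\SBim(\h)$ is Krull--Schmidt and graded ranks of summands are non-negative Laurent polynomials, every indecomposable summand of $BS$ --- in particular $B_x$ --- inherits the same parity. You instead read the parity straight off the displayed formula $\grrk B_x = \sum_y v^{-\ell(y)} h_{y,x}$, invoking the standard degree bound and parity of Kazhdan--Lusztig polynomials in Soergel's normalisation. Your route is quicker once one grants those KL properties, but it relies on Soergel's conjecture (needed for the displayed formula), whereas the paper's argument is intrinsic to the bimodule category and hence also covers the $p$-Kazhdan--Lusztig situation mentioned in the Remark that follows.
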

	\begin{proof}
		We prove the claim for Soergel bimodules. 
		For any $s \in S$  we have an isomorphism of left $R$-modules $B_s \cong R(-1) \oplus R(1)$, see \cite[\S4]{Soe07}, so $\grrk B_s = v+v^{-1}$.
		Consequently, if $s_1 \dots s_k$ is an expression, then the Bott-Samelson bimodule $BS = B_{s_1} \otimes_R \dots \otimes_R B_{s_k}$ has graded rank $\grrk BS = (v+v^{-1})^{k} \in v^k \Z[v^{-2}]$.  
		The claim then follows from the facts that: (1) $\SBim(\h)$ is a Krull-Schmidt category; and, (2) if $s_1 \dots s_k$ is a reduced expression for $x$ then $B_x$ is a direct summand of $BS$ with multiplicity 1.
	\end{proof}

\section{Jordan-H\"{o}lder multiplicities of the big tilting object}
\label{Sec:JH mult}

	Recall $W$ is finite with longest element $w_0$. 
	In this section we determine the Jordan-H\"{o}lder multiplicities of $T_{w_0}$ and deduce a formula for $[T_{w_0}]$ in the split Grothendieck ring of $[\Perv(\h)]$. 
	
	\begin{lem}
	\label{Lem: grrk Soergel functor}
		For any $x \in W$ we have $\grrk \bV^{\bullet}(T_{x}) = \grrk \check{B}_{x}$. 
	\end{lem}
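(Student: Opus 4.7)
The plan is to use Koszul duality to transport the Hom-space definition of $\bV^{\bullet}(T_x)$ into $D^{\mix}(\check{\h})$, and then appeal to Soergel's Struktursatz to identify the result with $\check{B}_x$ (at least in the root realisation case).

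First, I would unwind
\begin{align*}
	\bV^{\bullet}(T_x) = \bigoplus_{n \in \Z}\Hom_{D^{\mix}(\h)}(T_{w_0}\tate{-\ell(w_0)}, T_x\tate{n})
\end{align*}
and apply $\kappa$. Because $\kappa$ sends $T_{w_0} \mapsto \check{B}_{w_0}$, $T_x \mapsto \check{B}_x$, and intertwines $\tate{n}$ with $(n)$, the right-hand side becomes $\bigoplus_{n}\Hom_{D^{\mix}(\check{\h})}(\check{B}_{w_0}(-\ell(w_0)), \check{B}_x(n))$. Since both arguments are complexes concentrated in cohomological degree zero, these morphism spaces reduce to Hom spaces in the underlying additive category $\SMod(\check{\h})$.

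For $\h$ a root realisation, the Struktursatz recorded at the end of Section \ref{Sec: functors} already supplies the finer isomorphism $\bV^{\bullet}(T_x) \cong \check{B}_x$ in $\SMod(\check{\h})$, so the passage to graded ranks is trivial. For a general reflection faithful realisation --- for example, the geometric realisation of a non-crystallographic finite Coxeter system --- I would instead aim to show directly that $\Hom^{\bullet}_{\SMod(\check{\h})}(\check{B}_{w_0}(-\ell(w_0)), -)$ coincides with the graded rank functor on indecomposable Soergel modules.

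The hard part will be this last verification, which amounts to an Endomorphismensatz-style statement identifying $\check{B}_{w_0}(-\ell(w_0))$ as a graded projective generator of $\SMod(\check{\h})$ whose Hom functor computes graded ranks. I would expect this either to fall out of a direct computation with the standard filtration of $\check{B}_{w_0}$ (combined with the parity property from Lemma \ref{Lem: parity}), or to be obtainable by bootstrapping from the root realisation case through Koszul duality.
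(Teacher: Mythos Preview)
Your strategy matches the paper's almost exactly: it also gives the root-realisation case via the graded Struktursatz, and for the general case it applies Koszul duality to reduce to computing $\grrk \Hom^{\bullet}_{\SMod(\check{\h})}(\check{B}_{w_0}(-\ell(w_0)), \check{B}_{x})$. The step you flag as the ``hard part'' is completed in the paper not by a projective-generator argument but by Soergel's Hom formula, which gives
\[
\grrk \Hom^{\bullet}_{\SMod(\check{\h})}(\check{B}_{w_0}(-\ell(w_0)), \check{B}_{x})
=
\sum_{y} v^{2\ell(y)-\ell(w_0)}\, h_y(\check{B}_{w_0})\, h_y(\check{B}_{x}),
\]
together with the identification of $\check{B}_{w_0}$ with the (shifted) coinvariant algebra, so that $h_y(\check{B}_{w_0}) = v^{\ell(w_0)-2\ell(y)}$ and the sum collapses to $\sum_y h_y(\check{B}_x) = \grrk \check{B}_x$. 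The parity property of Lemma~\ref{Lem: parity} is not needed here, and your alternative suggestion of bootstrapping from the root case via Koszul duality is circular, since you have already invoked Koszul duality to reach this point.
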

	
	We provide two proofs of the Lemma. The first is morally correct and justifies the presence of $\check{B}_{x}$ beyond numerical serendipity. The second holds more generally.  
	
	\begin{proof}[Proof (specific to root realisations)]
		When $\h$ is a root realisation, the claim is immediate from the graded analogue of Soergel's Struktursatz, i.e. the isomorphism $\bV^{\bullet}(T_{x}) \cong  \check{B}_{x}$. 
		To be more precise, in \cite[Lemmas 3.9 and 3.10]{AMRW19} it is shown that a `left-monodromic' analogue of $\bV^{\bullet}$ is fully faithful from the category of left-monodromic tilting complexes into the category $\SMod(\check{\h})$. 
		By \cite[Propositions 2.1]{AMRW19} the category of left-monodromic tilting complexes is equivalent to $\Tilt(\h)$, so the composition of these equivalences proves the claim.  
	\end{proof}
		
	\begin{proof}[Proof (that holds in the absence of a graded Struktursatz)]
		Let $\h$ be the geometric realisation of $(W,S)$. 
		Then Koszul duality implies 
		\begin{align*}
		 	\grrk \bV^{\bullet}(T_{x}) 
		 	&=
		 	\grrk \Hom^{\bullet}_{D^{\mix}(\h)} (T_{w_0} \tate{-\ell(w_0)} , T_{x})
		 	\\
		 	&=
		 	\grrk \Hom^{\bullet}_{D^{\mix}(\check{\h})} (\check{B}_{w_0} (-\ell(w_0)) , \check{B}_{x})
		 	\\
		 	&=
		 	\grrk \Hom^{\bullet}_{\SMod(\check{\h})} (\check{B}_{w_0} (-\ell(w_0)) , \check{B}_{x}). 
		\end{align*}
		Soergel's Hom formula \cite[\S 5]{Soe07} and its analogue for Soergel modules \cite[\S 1.7]{Ric19} implies  
		\begin{align*}
			\grrk \Hom^{\bullet}_{\SMod(\check{\h})} (\check{B}_{w_0} (-\ell(w_0)) , \check{B}_{x})
			&=
			\sum v^{2\ell(y)-\ell(w_0)}h_y(\check{B}_{w_0})~ h_y(\check{B}_x).
		\end{align*}
		For any finite Coxeter group $W$ we have an explicit description for the Soergel module $\check{B}_{w_0}$, namely $\check{B}_{w_0} \cong \check{R} \otimes_{\check{R}^W} \Bbbk (\ell(w_0))$, i.e. $\check{B}_{w_0}$ is isomorphic to the coinvariant algebra (normalised so that it degree-symmetric about zero). 
		Consequently $h_y(\check{B}_{w_0}) = v^{\ell(w_0) - 2\ell(y)}$. 
		Hence 
		\begin{align*}
			\grrk \Hom^{\bullet}_{\SBim(\check{\h})} (\check{B}_{w_0} (-\ell(w_0)) , \check{B}_{x})
			= 
			\sum h_y(\check{B}_x)
			= 
			\grrk \check{B}_x
		\end{align*}
		which completes the proof. 
	\end{proof}

	We now determine the graded Jordan-H\"{o}lder multiplicities of the big tilting object, $T_{w_0}$.
	
	\begin{lem}
	\label{Lem: JH is grrk}
		For any $x \in W$ we have 
		\begin{align*}
			\sum_{i \in \Z} [T_{w_0} : L_x \tate{i}] v^{i} 
			=
			\grrk \check{B}_{xw_0}
		\end{align*}
	\end{lem}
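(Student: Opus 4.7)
The plan is to reduce the graded Jordan--H\"older multiplicity sum to a graded Hom space which, after Ringel duality and the isomorphism \eqref{Eqn: tilt proj iso}, is recognisable as Soergel's functor $\bV^{\bullet}(T_{xw_0})$; Lemma \ref{Lem: grrk Soergel functor} then finishes the computation.

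The first ingredient I would set up is a graded injective-Hom formula in the highest weight category $\Perv(\h)$. Since $\tate{i}$ is an autoequivalence of $\Perv(\h)$, the object $I_x\tate{i}$ is the injective envelope of $L_x\tate{i}$, and the socle calculation $\dim \Hom(L_y\tate{j}, I_x\tate{i}) = \delta_{x,y}\delta_{i,j}$ together with the exactness of $\Hom(-, I_x\tate{i})$ yields, by the standard induction on length, $\dim \Hom(M, I_x\tate{i}) = [M : L_x\tate{i}]$ for every $M \in \Perv(\h)$. Summing with weights $v^i$ gives
\begin{align*}
\sum_i [T_{w_0} : L_x\tate{i}] v^i = \grrk \Hom^{\bullet}(T_{w_0}, I_x).
\end{align*}

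Next, I would apply Ringel duality, which is an autoequivalence of $D^{\mix}(\h)$ commuting with $\tate{k}$ and hence preserving graded Hom spaces. Re-indexing the chain $I_{xw_0}\tate{n} \mapsto T_x\tate{n} \mapsto P_{xw_0}\tate{n}$ using $w_0^2 = e$ yields both $\tR(I_x) = T_{xw_0}$ and $\tR(T_{w_0}) = P_{\id}$, so
\begin{align*}
\Hom^{\bullet}(T_{w_0}, I_x) \cong \Hom^{\bullet}(P_{\id}, T_{xw_0}).
\end{align*}
The isomorphism $P_{\id} \cong T_{w_0}\tate{-\ell(w_0)}$ from \eqref{Eqn: tilt proj iso} transforms the right-hand side into $\Hom^{\bullet}(T_{w_0}\tate{-\ell(w_0)}, T_{xw_0}) = \bV^{\bullet}(T_{xw_0})$. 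Taking graded ranks and invoking Lemma \ref{Lem: grrk Soergel functor} with $xw_0$ in place of $x$ then delivers $\grrk \check{B}_{xw_0}$, completing the argument.

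The plan presents no serious obstacle: the graded injective-Hom formula is standard for graded highest weight categories, and all subsequent steps are formal manipulations of the structural isomorphisms recorded in Sections \ref{Sec: functors} and \ref{Sec: ranks}. The conceptual content is the observation that Ringel duality swaps the pair $(T_{w_0}, I_x)$ into $(P_{\id}, T_{xw_0})$, at which point \eqref{Eqn: tilt proj iso} reshapes the Hom space into precisely $\bV^{\bullet}(T_{xw_0})$, bringing the preceding Lemma directly to bear.
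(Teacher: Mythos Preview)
Your argument is correct and essentially identical to the paper's: both reduce the multiplicity sum to $\grrk \Hom^{\bullet}(T_{w_0}, I_x)$ via the injective-envelope property, then combine \eqref{Eqn: tilt proj iso} with Ringel duality to identify this with $\bV^{\bullet}(T_{xw_0})$ and invoke Lemma~\ref{Lem: grrk Soergel functor}. The only cosmetic difference is the order of operations: the paper first replaces $T_{w_0}$ by $I_{\id}\tate{-\ell(w_0)}$ and then applies $\tR$ to a pair of injectives, whereas you apply $\tR$ first (sending $T_{w_0}\mapsto P_{\id}$, $I_x\mapsto T_{xw_0}$) and then use \eqref{Eqn: tilt proj iso} on $P_{\id}$.
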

	\begin{proof}
		Since $I_x\tate{i}$ is an injective envelope of $L_x \tate{i}$, for any object in $A$ in $\Perv(\h)$, we have  
		\begin{align*}
			\sum_{i \in \Z} [A : L_x \tate{i}] v^{i}
			= 
			\sum_{i \in \Z} \text{rk} \Hom_{\Perv(\h)} (A , I_x\tate{i}) v^{i} = \grrk \Hom^{\bullet}_{\Perv(\h)}(A, I_x).
		\end{align*}
		Now observe that, by the isomorphism in Equation (\ref{Eqn: tilt proj iso}) and Ringel duality, we have 
		\begin{align*}
			\Hom^{\bullet}_{\Perv(\h)}(T_{w_0}, I_x) 
			&\cong 
			\Hom^{\bullet}_{\Perv(\h)}(I_{\id}\tate{-\ell(w_0)}, I_x) 
			\\
			&\cong 
			\Hom^{\bullet}_{D^{\mix}(\h)}(I_{\id}\tate{-\ell(w_0)}, I_x) 
			\\
			&\cong
			\Hom^{\bullet}_{D^{\mix}(\h)}(T_{w_0}\tate{-\ell(w_0)}, T_{xw_0}) 
			\\
			&=
			\bV^{\bullet}(T_{xw_0}).
		\end{align*}
		The claim then follows from Lemma \ref{Lem: grrk Soergel functor}. 
	\end{proof}

	Before we prove the main result of this section, we need to briefly discuss Grothendieck groups of categories with shift. 
	\\
	\par
	Let $[\SMod(\h)]$ denote the split Grothendieck group of $\SMod(\h)$. 
	The shift autoequivalence $(1)$ allows us to endow $[\SMod(\h)]$ with the structure of a $\Z[v,v^{-1}]$-module by imposing the relation $[B(1)] = v[B]$ for any object $B$. 
	Let $[D^{\mix}(\h)]$ denote the triangulated Grothendieck group of $D^{\mix}(\h)$.
	The $\Z[v,v^{-1}]$-module structure on $[\SMod(\h)]$ induces a $\Z[v,v^{-1}]$-module structure on $[D^{\mix}(\h)]$. 
	However, the category $D^{\mix}(\h)$ has an additional autoequivalence $[1]$ coming from cohomological shift. 
	For any complex $C$ the triangulated structure requires we impose the relation $[C[1]] = -[C]$.  
	Since $\Perv(\h)$ is stable under $\tate{1}= [1](-1)$, we endow $[\Perv(\h)]$ with the structure of a $\Z[v,v^{-1}]$-module, where  $[A \tate{1}] = -v^{-1}[A]$ for any object $A$. 
	\\
	\par 
	We can now state the main result of this section. 
	\begin{prop}
	\label{Prop: gro ring}
		In the split Grothendieck group $[\Perv(\h)]$, we have the following identity
		\begin{align*}
			[T_{w_0}] 
			= 
			\sum_{x} (-1)^{\ell(xw_0)} 
			\grrk \check{B}_{ x w_0}
			~ [L_x].
		\end{align*}
	\end{prop}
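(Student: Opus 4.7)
The plan is to expand $[T_{w_0}]$ in the $\Z[v,v^{-1}]$-basis $\{[L_x]\}_{x \in W}$ of $[\Perv(\h)]$ using Jordan-H\"{o}lder multiplicities and then, for each $x \in W$, identify the coefficient of $[L_x]$ with $(-1)^{\ell(xw_0)}\grrk\check{B}_{xw_0}$ using Lemma~\ref{Lem: JH is grrk} and Lemma~\ref{Lem: parity}.

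First I would write the Jordan-H\"{o}lder decomposition and apply the Grothendieck-group relation $[A\tate{1}]=-v^{-1}[A]$ to obtain
\begin{align*}
[T_{w_0}] \;=\; \sum_{x\in W}\sum_{i\in\Z}[T_{w_0}:L_x\tate{i}]\,[L_x\tate{i}] \;=\; \sum_{x\in W}\Big(\sum_{i\in\Z}[T_{w_0}:L_x\tate{i}]\,(-1)^i v^{-i}\Big)[L_x].
\end{align*}
It then suffices to show that, for each $x$, the inner coefficient equals $(-1)^{\ell(xw_0)}\grrk\check{B}_{xw_0}$.

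By Lemma~\ref{Lem: JH is grrk}, the polynomial $f_x(v):=\sum_i [T_{w_0}:L_x\tate{i}]\,v^i$ equals $\grrk\check{B}_{xw_0}$, and by Lemma~\ref{Lem: parity} it lies in $v^{\ell(xw_0)}\Z[v^{-2}]$. In particular, $[T_{w_0}:L_x\tate{i}]$ vanishes unless $i\equiv\ell(xw_0)\pmod 2$, so $(-1)^i=(-1)^{\ell(xw_0)}$ on the support of the inner sum, and the coefficient of $[L_x]$ collapses to
\begin{align*}
(-1)^{\ell(xw_0)}\sum_i [T_{w_0}:L_x\tate{i}]\,v^{-i} \;=\; (-1)^{\ell(xw_0)}\,f_x(v^{-1}).
\end{align*}
I would close the argument by invoking the bar-invariance $f_x(v^{-1})=f_x(v)$, i.e.\ the palindromy of $\grrk\check{B}_{xw_0}$, which is a standard consequence of the self-duality of indecomposable Soergel modules (equivalently, the bar-invariance of the Kazhdan-Lusztig basis). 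The only real obstacle is recognising that two independent symmetry inputs are both required: Lemma~\ref{Lem: parity} supplies the overall sign $(-1)^{\ell(xw_0)}$, while bar-invariance is needed to undo the substitution $v\mapsto v^{-1}$ forced by the Grothendieck-group relation $[A\tate{1}]=-v^{-1}[A]$. Neither alone yields the stated formula.
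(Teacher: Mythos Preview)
Your argument is correct and matches the paper's proof essentially step for step: expand $[T_{w_0}]$ via Jordan--H\"older multiplicities, use the relation $[L_x\tate{i}]=(-v)^{-i}[L_x]$, invoke Lemma~\ref{Lem: parity} to extract the sign $(-1)^{\ell(xw_0)}$, and then Lemma~\ref{Lem: JH is grrk} to identify the remaining polynomial with $\grrk\check{B}_{xw_0}$. You are in fact more careful than the paper on one point: the paper passes silently from $\sum_i[T_{w_0}:L_x\tate{i}]v^{-i}$ to $\grrk\check{B}_{xw_0}$, whereas you correctly flag that this requires the bar-invariance $f_x(v^{-1})=f_x(v)$, which follows from the self-duality of indecomposable Soergel (bi)modules.
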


	\begin{proof}
		Fix $x \in W$, then Lemmas \ref{Lem: parity} and \ref{Lem: JH is grrk} imply
		\begin{align*}
			\sum_{i \in \Z} [T_{w_0} : L_x \tate{i}] [L_x \tate{i}]
			&=
			\sum_{i \in \Z} [T_{w_0} : L_x \tate{i}][L_x] (-v)^{-i}   
			\\&=
			 (-1)^{\ell(xw_0)} \sum_{i \in \Z} [T_{w_0} : L_x \tate{i}][L_x] v^{-i} 
			\\&= 
			(-1)^{\ell(xw_0)} \grrk \check{B}_{ x w_0} [L_x].
		\end{align*}
		Hence, by considering any composition series of $T_{w_0}$, we obtain
		\begin{align*}
			[T_{w_0}] 
			= 
			\sum_{x \in W, i \in \Z} [T_{w_0} : L_x \tate{i}] [L_x \tate{i}]
			= 
			\sum_{x \in W} (-1)^{\ell( x w_0)} \grrk \check{B}_{ x w_0}~ [L_x]
		\end{align*}
		which is the desired identity.
	\end{proof}
	
	\begin{rem}
		The statement of each result in this section remains valid if $\h$ is a realisation satisfying the assumptions of \cite[\S 2.1]{RV23}; this includes realisations defined over fields of positive characteristic. If $\h$ is not reflection faithful then $\check{B}_x$ should be interpreted as the relevant indecomposable Abe bimodule, as introduced in \cite{Abe21}.
	\end{rem}

\section{Hecke algebras}
\label{Sec: Hecke algebras}

	In this section we fix notation relating to Hecke algebras and deduce an expression for the antisymmetriser idempotent. 
	\\
	\par
	The \ldef{Hecke algebra} $H$ associated to a Coxeter system $(W,S)$ is the associative $\Z[v,v^{-1}]$-algebra on the symbols $\{ \delta_x \vert x \in W\}$ subject to the relations
	\begin{align*}
		(\delta_s+v)(\delta_s-v^{-1}) &=0 && \text{for all } s\in S, \text{ and,}
		\\
		\delta_x \delta_y &= \delta_{xy} && \text{whenever } \ell(x) + \ell(y) = \ell(xy).
	\end{align*}
	The symbols $\{ \delta_x \vert x \in W\}$ are a basis of $H$ called the standard basis \cite{Tits69}. 
	It also has a canonical basis $\{ b_x \vert x \in W \}$ called the Kazhdan-Lusztig basis, which we normalise as in \cite{Soe97}. 
	In this normalisation, one has $b_s = \delta_s + v$ for each $s \in S$.  
	\\
	\par
	Let us recall two involutions on the Hecke algebra. 
	The \ldef{Koszul involution} $\kappa$ is the unique $\Z$-linear involution satisfying $\kappa(v) = -v^{-1}$ and $\kappa(\delta_x) = \delta_x$ for each $x \in W$. 
	The \ldef{Kazhdan-Lusztig involution} is the unique $\Z$-linear involution satisfying $\overline{v} = v^{-1}$ and $\overline{\delta_x} = \delta_{x^{-1}}^{-1}$ for each $x \in W$. 
	By definition, the Kazhdan-Lusztig basis satisfies $\overline{b_x} = b_x$ for each $x \in W$. 
	\\
	\par 
	Soergel's categorification theorem,  \cite[\S1.1]{Soe90} and \cite[\S6.6]{ARV20}, states that for the realisations we consider (and many more), there is a unique isomorphism of right $H$-modules  
	\begin{align*}
		[\SMod(\h)] \tilde{\longrightarrow} H
	\end{align*}
	which is induced by the map $[B_s] \mapsto b_s$ for each $s \in S$. 
	This extends to an isomorphism
	\begin{align*}
		[D^{\mix}(\h)] \tilde{\longrightarrow} H
	\end{align*} 
	which satisfies $[\Delta_x] = \delta_x$ for each $x \in W$. 
	\\
	\par 
	A realisation is said to satisfy \ldef{Soergel's conjecture} if $[B_x] = b_x$ for each $x \in W$. 
	Soergel showed, through recourse to geometry, that the root realisation (extended to a field of characteristic 0) satisfies Soergel's conjecture \cite{Soe92}. 
	It is a consequence of a celebrated theorem of Elias and Williamson \cite{EW14} that the geometric realisation also satisfies Soergel's conjecture. 
	\\
	\par 
	Henceforth, we write $H$ for the $\Q(v)$-algebra $H \otimes_{\Z[v,v^{-1}]} \Q(v)$. 
	\\
	\par 
	Each Hecke algebra has a unique rank 1 $H$-module where $\delta_x$ acts by $(-v)^{\ell(x)}$ for each $x \in W$, called the (quantised) sign module.
	The \ldef{antisymmetriser idempotent} $e_{\sign}$ is a primitive idempotent in $H$ satisfying $e_{\sign}H$ is isomorphic to the quantised sign module.
	Since $W$ is finite, $e_{\sign}$ exists and is unique up to sign.
	It is well-known and easy to check 
	\begin{align*}
		 e_{\sign} = \left( \prod_{k=1}^n\frac{v-v^{-1}}{v^k - v^{-k}}  \right) \sum_{x \in W} (-1)^{\ell(x)} v^{-\ell( x w_0)} \delta_x.
	\end{align*}
	We normalise $e_{\sign}$ so that, when expressed in the Kazhdan-Lusztig basis, the coefficient of $b_{\id}$ is positive.  
	
	\begin{prop}
	\label{Prop: antisymmetriser}
		An expression for the antisymmetriser idempotent is
		\begin{align*}
			e_{\sign} 
			= 
			\sum_{x \in W} 
			(-1)^{\ell(x)}
			~
			\frac{\grrk \check{B}_{ x w_0}}{\grrk \check{B}_{w_0}}
			~
			b_x.
		\end{align*} 
	\end{prop}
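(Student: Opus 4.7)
The plan is to compute $[T_{w_0}]$ as an element of the Hecke algebra $H$ in two independent ways and then equate the resulting expressions. One computation uses Koszul duality; the other uses Proposition \ref{Prop: gro ring}.

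First, via Koszul duality: the equivalence $\kappa : D^{\mix}(\h) \xrightarrow{\sim} D^{\mix}(\check{\h})$ sends $T_{w_0}$ to $\check{B}_{w_0}$ and, under the isomorphisms $[D^{\mix}(\h)] \cong H \cong [D^{\mix}(\check{\h})]$ coming from Soergel's categorification theorem, descends to the Koszul involution on $H$. Since $[\check{B}_{w_0}] = b_{w_0}$ by Soergel's conjecture and $\kappa$ is an involution, we obtain $[T_{w_0}] = \kappa(b_{w_0})$ in $H$. Using $h_{y,w_0} = v^{\ell(w_0)-\ell(y)}$, together with $\kappa(v) = -v^{-1}$, $\kappa(\delta_y) = \delta_y$, and the length identity $\ell(w_0) - \ell(y) = \ell(yw_0)$, this yields
\begin{align*}
    [T_{w_0}] = \sum_{y \in W} (-1)^{\ell(yw_0)} v^{-\ell(yw_0)} \delta_y.
\end{align*}
Comparing with the explicit formula for $e_{\sign}$ displayed above, and noting that the normalizing product equals $(\grrk \check{B}_{w_0})^{-1}$ (which follows from the identity $\grrk \check{B}_{w_0} = v^{\ell(w_0)} \sum_y v^{-2\ell(y)}$ together with the standard factorization of the Poincar\'e polynomial of $W$), the expression rearranges to $[T_{w_0}] = (-1)^{\ell(w_0)} \grrk \check{B}_{w_0} \cdot e_{\sign}$.

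Second, via Proposition \ref{Prop: gro ring}: under the induced isomorphism $[\Perv(\h)] \cong H$, the simple classes correspond to the Kazhdan-Lusztig basis, i.e.\ $[L_x] = b_x$. This is the standard identification in this setting, characterized by the fact that $[L_x]$ is self-dual under the involution induced by Verdier duality and is triangular with respect to $[\Delta_x] = \delta_x$, matching the uniqueness characterization of the Kazhdan-Lusztig basis. Substituting $[L_x] = b_x$ into Proposition \ref{Prop: gro ring} and using $(-1)^{\ell(xw_0)} = (-1)^{\ell(w_0)}(-1)^{\ell(x)}$ gives
\begin{align*}
    [T_{w_0}] = (-1)^{\ell(w_0)} \grrk \check{B}_{w_0} \sum_{x \in W} (-1)^{\ell(x)} \frac{\grrk \check{B}_{xw_0}}{\grrk \check{B}_{w_0}} \, b_x.
\end{align*}
Equating the two expressions for $[T_{w_0}]$ and cancelling the nonzero common factor $(-1)^{\ell(w_0)} \grrk \check{B}_{w_0}$ produces the claimed identity for $e_{\sign}$.

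The main obstacle is justifying $[L_x] = b_x$, which is a standard but non-trivial consequence of how the perverse $t$-structure interacts with the self-dual basis of $H$; with that in hand the remainder is a direct bookkeeping between the Koszul-dual computation and Proposition \ref{Prop: gro ring}.
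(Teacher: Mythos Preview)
Your proof is correct and follows the same two-step strategy as the paper: first identify $[T_{w_0}]$ as $(-1)^{\ell(w_0)}\grrk\check{B}_{w_0}\cdot e_{\sign}$ via its expression in the standard basis, then invoke Proposition~\ref{Prop: gro ring} together with $[L_x]=b_x$. The only cosmetic difference is in the first step: you obtain $[T_{w_0}]=\kappa_H(b_{w_0})$ by asserting that categorical Koszul duality decategorifies to the Koszul involution, whereas the paper reads off the $\Delta$-filtration multiplicities of $T_{w_0}$ directly from \cite{ARV20} (and explicitly remarks that the two computations are equivalent by Koszul duality); for $[L_x]=b_x$ the paper cites the isomorphism $B_x\cong L_x$ in $\Perv(\h)$ under Soergel's conjecture rather than the self-duality characterisation you sketch.
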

	\begin{proof}
		Being a tilting object, $T_{w_0}$ admits a graded $\Delta$-filtration, i.e. a filtration where successive quotients are of the form $\Delta_x \tate{n}$ for some $x \in W$ and $n \in \Z$. 
		The graded $\Delta$-filtration multiplicities of $T_{w_0}$, $(T_{w_0} : \Delta_x \tate{n})$, are known to be 
		\begin{align*}
			(T_{w_0} : \Delta_x \tate{n}) 
			= 
			\begin{cases}
				1
				& \text{if } n = \ell( x w_0),
			\\
				0
				& \text{otherwise}
			\end{cases}
		\end{align*}
		by \cite[\S 10.3]{ARV20}. Note that by Koszul duality, this is equivalent to the well-known statement that the Kazhdan-Lusztig polynomial $h_{x,w_0}$ is equal to $v^{\ell( x w_0)}$.
		Thus, after identifying $[D^{\mix}(\h)]$ with $H$, we have 
		\begin{align*}
			[T_{w_0}] 
			= \sum_{x \in W}  [\Delta_x \tate{\ell( x w_0)}] 
			= \sum_{x \in W} (-v)^{-\ell(x w_0)} \delta_x
			= (-1)^{\ell(w_0)} \sum_{x \in W} (-1)^{\ell(x)} v^{-\ell( x w_0)} \delta_x. 
		\end{align*}
		This implies that $[T_{w_0}]$ is an antisymmetriser quasi-idempotent for any realisation $\h$, which need not satisfy Soergel's conjecture. 
		Hence
		\begin{align*}
			[T_{w_0}]^2 
			= 
			\sum_{x \in W} v^{-\ell( x w_0) + \ell(x)} [T_{w_0}]
			=
			\grrk \check{B}_{w_0} [T_{w_0}],
		\end{align*}
		where the second equality follows from the fact $h_x(\check{B}_{w_0}) = v^{\ell(w_0) - 2\ell(x)}$. 
		Thus we obtain
		\begin{align*}
			e_{\sign} = \frac{(-1)^{\ell(w_0)} [T_{w_0}]}{\grrk \check{B}_{w_0}}
		\end{align*}
		where the factor $(-1)^{\ell(w_0)}$ will ensure that we agree with our sign convention for $e_{\sign}$.
		\\
		\par 
		It is known that if $\h$ satisfies Soergel's conjecture then $B_x \cong L_x$ in $\Perv(\h)$, \cite[\S 8.6]{ARV20}. 
		Hence after identifying $[\Perv(\h)]$ with $H$, one has $[L_x] = [B_x] =b_x$. 
		The claim is then immediate from Proposition \ref{Prop: gro ring} and the fact that all realisations we consider satisfy Soergel's conjecture. 
	\end{proof}
	
	\begin{rem}
		The structure constants $\mu_{y,x}^s$ appearing in $b_x b_s = \sum_z \mu_{y,x}^s b_y$ are, in general, extremely poorly understood and intimately related to $\mu(y,x)$, the coefficient of $v$ in $h_{y,x}$. 
		Since $[T_{w_0}] \delta_s = -v[T_{w_0}]$ and $b_s = \delta_s + v$, it follows $[T_{w_0}]b_s = 0$  for each $s \in S$. 
		This implies that for each $y \in W$ and $s \in S$ we have the following identity
		\begin{align*}
			\sum_x (-1)^{\ell(x)} ~\grrk \check{B}_{ x w_0}~ \mu^s_{y,x}=0,
		\end{align*} 
		which may be of independent interest. 
	\end{rem}
	
\section{Temperley-Lieb algebras and Jones-Wenzl idempotents}
\label{Sec: TL}

	We now introduce the Temperley-Lieb algebra and finally deduce the formula for the Jones-Wenzl idempotent that was stated in Theorem \ref{Thm: JW type A}. 
	\\
	\par 
	The \ldef{Temperley-Lieb algebra} $TL_n$ is the associative, unital $\Q(v)$-algebra generated by $u_1 , \dots, u_{n-1}$ subject to the relations:
	\begin{align*}
		u_i u_j &= u_j u_i
		&&
		\text{whenever } |i-j| > 1,
		\\
		u_i u_j u_i &= u_i
		&&
		\text{whenever } |i-j| = 1,
		\\
		u_i ^2 &= (v+v^{-1}) u_i
		&&
		\text{for each } 1 \leq i < n.
	\end{align*}
	There is also the algebra $TL_n^-$ defined analogously, where the final relation is replaced by the condition $u_i^2 = -(v+v^{-1})u_i$.  
	\\
	\par
	The Temperley-Lieb algebras are well-known to arise naturally as quotients of the Hecke algebra of type $\tA_{n-1}$. 
	In particular, let $(W,S)$ be a Coxeter system of type $\tA_{n-1}$, and let $s,t \in S$ satisfy $sts=tst$ and $s \neq t$, then we have a commutative diagram 
	\[
			\begin{tikzcd}
				0
				\arrow[r] 
			&
				\langle b_{sts} \rangle
				\arrow[r] 
				\arrow[d] 
			&  
				H
				\arrow[r, "\pi"] 
				\arrow[d] 
			&
				TL_{n}
				\arrow[r]
				\arrow[d]  
			&
				0
			\\
				0
				\arrow[r] 
			&
				\langle \kappa(b_{sts}) \rangle
				\arrow[r]  
			&  
				H
				\arrow[r , "\pi_-"] 
			&
				TL^-_{n}
				\arrow[r] 
			&
				0
			\end{tikzcd}
	\]
	where each row is exact, and each vertical arrow is induced by the Koszul involution $\kappa$.
	We emphasise that we write $\pi : H \rightarrow TL_n$ for the quotient map. 
	\\
	\par 
	Recall that an expression $s_1 \dots s_k$ is a reduced expression for $x$ if $x = s_1 \dots s_k$ and $\ell(x)=k$.
	Following \cite{Ste96}, an element $x$ is said to be \ldef{fully commutative} if no reduced experssion contains a substring which is a reduced expression for the longest element of a Coxeter system of type $\tI_2(m)$, where $m\geq 3$. 
	Equivalently, all reduced expressions for $x$ may be obtained from a single reduced expression by applying relations of the form $st=st$.
	We let $W_{FC} \subset W$ denote the subset of fully commutative elements. 
  	\\
	\par 
	Now impose the standard type $\tA_{n-1}$ ordering on the simple reflection $s \in S$. 
	Given a reduced expression $s_{i_1} \dots s_{i_k}$ for $x$, we define $u_x := u_{i_1} \dots u_{i_k}$. 
	The element $u_x$ is independent of choice of reduced expression. 
	Moreover, the set $\{ u_x \vert x \in W_{FC}\}$ is a $\Z[v,v^{-1}]$-basis of $TL_n$ called the \ldef{monomial basis} of $TL_n$, see \cite[\S2.2]{Fan96}. 
	It is easy to check that $\pi(b_{s_i}) = u_i$. 
	\\
	\par 
	The \ldef{Jones-Wenzl idempotent} $j_n$ in $TL_n$ is the unique idempotent satisfying $j_n u_i =0$ for all $1 \leq i < n$, and whose coefficient of the identity is 1 when expressed in the monomial basis. 
	\\
	\par
	We can now prove the main theorem. 
	\begin{proof}[Proof of Theorem \ref{Thm: JW type A}]
		By definition $e_{\sign}\delta_s  = -ve_{\sign}$ for all $s \in S$. Since $b_s = \delta_s + v$ and $\pi(b_s) = u_i$, it follows that $\pi(e_{\sign}) = j_n$. 
		It is a theorem of Fan and Green, see \cite[\S3.8]{FG97}, that 
		\begin{align*}
			\pi(b_x) = 
			\begin{cases}
				u_x & \text{if } x \in W_{FC}
				\\
				0 & \text{otherwise.}
			\end{cases}
		\end{align*}
		Applying this to Proposition \ref{Prop: antisymmetriser}, one finds
		\begin{align*}
			j_n 
			=
			\pi(e_{\sign})
			= 
			\sum_{x \in W} 
			(-1)^{\ell(x)}
			~
			\frac{\grrk \check{B}_{ x w_0}}{\grrk \check{B}_{w_0}}
			~
			\pi(b_x)
			=
			\sum_{x \in W_{FC}} 
			(-1)^{\ell(x)}
			~
			\frac{\grrk \check{B}_{ x w_0}}{\grrk \check{B}_{w_0}}
			~
			u_x.
		\end{align*}
		which proves the claim. 
	\end{proof}

	The following is a particularly lovely example of Theorem \ref{Thm: JW type A}. 
	\begin{exmp}
		Let $n=3$. 
		Recall the quantum integers $[2] = v+v^{-1}$ and $[3] = v^{-2} + 1 + v^{-2}$, and the quantum factorial $[3]! = [3][2]$.
		It is well-known that $j_3$ can be written as 
		\begin{align*}
			j_3 
			= 
			\frac{[3][2]}{[3]!} u_{\id} - \frac{[2][2]}{[3]!}u_1 - \frac{[2][2]}{[3]!}u_2 + \frac{[2]}{[3]!} u_{12} + \frac{[2]}{[3]!} u_{21}. 
		\end{align*}
		For $x \in W$, we define $[x] = \sum_{y \leq x} v^{\ell(x)-2\ell(y)}$, which is the Poincar\'{e} polynomial of the Bruhat interval $[\id, x]$. 
		As every Schubert variety in the flag variety $SL_3/B$ is smooth, we have $\grrk \check{B}_x = [x]$ for all $x \in W$. 
		Thus, in this special case, the coefficients are simply the Poincar\'{e} polynomials of various Bruhat intervals. 
		Namely
		\begin{align*}
			j_3 
			= 
			\frac{[s_1s_2s_1]}{[s_1s_2s_1]} u_{\id} - \frac{[s_2s_1]}{[s_1s_2s_1]}u_1 - \frac{[s_1s_2]}{[s_1s_2s_1]}u_2 + \frac{[s_2]}{[s_1s_2s_1]} u_{12} + \frac{[s_1]}{[s_1s_2s_1]} u_{21}. 
		\end{align*}
	\end{exmp}

	\begin{rem}
		Using the Koszul involution $\kappa$, and the parity property in Lemma \ref{Lem: parity}, one finds that the analogous formula for $j_n^-$ in $TL_n^-$ is:
		\begin{align*}
			j_n^- 
			= 
			\sum_{x \in W_{FC}} 
			~
			\frac{\grrk \check{B}_{ x w_0}}{\grrk \check{B}_{w_0}}
			~
			u_x^-
		\end{align*}
		where $\{ u_x^- ~\vert~ x \in W_{FC} \}$ is the monomial basis of $TL_n^-$. 
	\end{rem}
	
\section{Generalised Jones-Wenzl idempotents}
\label{Sec: generalised}

	We now present an analogue of Theorem \ref{Thm: JW type A} for generalised Jones-Wenzl idempotents.  
	\\
	\par 
	For any Coxeter system $(W,S)$, the generalised Temperley-Lieb algebra $TL_W$ was independently introduced in \cite{Gra96} and \cite{Fan96}. 
	Let $J < H$ be the ideal generated by 
	\begin{align*}
		\{ b_{w_I}~|~ I \subset S, ~ |I|=2, \text{ and }(W_I, I) \text{ is a Coxeter system of type } \tI_2(m) \text{ where } 2< m < \infty \}
	\end{align*}
	where $w_I$ denotes the longest element in $W_I$.
	The \ldef{generalised Temperley-Lieb algebras} $TL_W$ and $TL_W^-$ are defined by the commuting diagram 
	\[
			\begin{tikzcd}
				0
				\arrow[r] 
			&
				J
				\arrow[r] 
				\arrow[d] 
			&  
				H
				\arrow[r, "\pi"] 
				\arrow[d] 
			&
				TL_{W}
				\arrow[r]
				\arrow[d]  
			&
				0
			\\
				0
				\arrow[r] 
			&
				\kappa(J)
				\arrow[r]  
			&  
				H
				\arrow[r , "\pi_-"] 
			&
				TL^-_{W}
				\arrow[r] 
			&
				0
			\end{tikzcd}
	\]
	where each row is exact, and each vertical map is induced by the Koszul involution $\kappa$. 
	The \ldef{generalised Jones-Wenzl idempotent} $j_W$ is defined in \cite{Sen19} as $j_W = \pi(e_{\sign})$.
	\\
	\par 
	As in the classical case, if $s_1 \dots s_k$ is a reduced expression for $x$, then we define $u_x := \pi(b_{s_1} \dots b_{s_k})$. 
	If $x \in W_{FC}$, the element $u_x$ is independent of the choice of reduced expression, and $\{ u_x \vert x \in W_{FC} \}$ is the \ldef{monomial basis} of $TL_W$ \cite[\S3]{GL99}. 
	\\
	\par 
	The Kazhdan-Lusztig involution fixes $J$, so induces an involution on $TL_W$.
	In \cite{GL99}, the authors show that $TL_W$ admits an \ldef{IC-basis} $\{ \beta_x \vert x \in W_{FC} \}$, in the sense of \cite{Du94}, with respect to the induced involution.
	The IC and monomial bases of $TL_W$ only coincide in types $\tA$, $\tD$ and $\tE$, see \cite[\S3]{GL99}. 
	\\
	\par 
	The relationship between the Kazhdan-Lusztig basis of $H$ and the IC-basis of $TL_W$ is subtle. 
	A Coxeter system is said to have the \ldef{projection property} if $\pi(b_x) = \beta_x$ for each $x \in W_{FC}$.
	The projection property is known to hold in finite types (with the exception of types $\tE_6$, $\tE_7$, and $\tE_8$  which remain open), see \cite{GL99,GL01}, and no Coxeter systems are known where the projection property does not hold. 
	Furthermore, in types $\tA, \tB, \tC, \tF_4, \tH_3, \tH_4$, and $\tI_2(m)$ it is known $\pi(b_x) =0$ if $x \notin W_{FC}$ while in types $\tE_6, \tE_7, \tE_8$ and $\tD_n$, with $n \geq 4$, there are $x \in W \backslash W_{FC}$ where $\pi(b_x)\neq 0$ \cite{Los00,GL01,Gre07}. 
	\\
	\par 
	From the preceding paragraph, it is clear that with minor modification of the proof of Theorem \ref{Thm: JW type A}, we have:

	\begin{thm}
	\label{Thm: gen JW}
		Let $(W, S)$ be of type $\tA, \tB, \tC, \tF_4, \tH_3, \tH_4$, or $\tI_2(m)$. 
		The generalised Jones-Wenzl idempotent $j_W$ has the following form when expressed in terms of the IC-basis of $TL_W$:
		\begin{align*}
			j_W = 
			\sum_{x \in W_{FC}} 
			(-1)^{\ell(x)}
			~\frac{\grrk \check{B}_{ x w_0}}{\grrk \check{B}_{w_0}}
			~\beta_x.
		\end{align*} 
	\end{thm}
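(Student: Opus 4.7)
My approach is to follow the proof of Theorem \ref{Thm: JW type A} essentially verbatim, replacing the monomial basis of $TL_n$ with the IC-basis of $TL_W$ in the final step. Concretely, applying the quotient map $\pi : H \to TL_W$ to the identity of Proposition \ref{Prop: antisymmetriser} and using the definition $j_W := \pi(e_{\sign})$, I obtain
\begin{align*}
j_W = \sum_{x \in W} (-1)^{\ell(x)} \frac{\grrk \check{B}_{xw_0}}{\grrk \check{B}_{w_0}} \pi(b_x).
\end{align*}
It then suffices to identify $\pi(b_x)$ in the two complementary cases $x \in W_{FC}$ and $x \notin W_{FC}$ and to check that these identifications yield an expression purely in the IC-basis.

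For $x \in W_{FC}$, I would invoke the projection property, which by \cite{GL99, GL01} holds in each of the types $\tA, \tB, \tC, \tF_4, \tH_3, \tH_4$, and $\tI_2(m)$; this gives $\pi(b_x) = \beta_x$. For $x \in W \setminus W_{FC}$, I would invoke the results of \cite{Los00, GL01, Gre07} recalled just before the theorem statement, which establish that $\pi(b_x) = 0$ in precisely the same list of types. Substituting these two identifications into the displayed sum and truncating to $W_{FC}$ produces the claimed formula.

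There is no genuine obstacle beyond careful bookkeeping: the restriction on the Coxeter type in the hypothesis is dictated entirely by the two facts about $\pi$ that are invoked, and both are imported from the literature. The substantive work has already been carried out in Theorem \ref{Thm: JW type A} and Proposition \ref{Prop: antisymmetriser}. The types $\tE_6, \tE_7, \tE_8$ are excluded because the projection property is open there, while types $\tD_n$ (with $n \geq 4$) and $\tE_n$ fail the vanishing condition, allowing $\pi(b_x) \neq 0$ for some $x \notin W_{FC}$; resolving either obstruction would be needed to extend the formula beyond the stated types.
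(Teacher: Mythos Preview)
Your proposal is correct and matches the paper's own argument: the paper does not give a separate proof of Theorem~\ref{Thm: gen JW} but simply remarks that it follows by ``minor modification of the proof of Theorem~\ref{Thm: JW type A}'', meaning precisely the substitution of the projection property $\pi(b_x)=\beta_x$ for $x\in W_{FC}$ and the vanishing $\pi(b_x)=0$ for $x\notin W_{FC}$ (both imported from \cite{GL99,GL01,Los00,Gre07}) into Proposition~\ref{Prop: antisymmetriser}. Your identification of which type restrictions are forced by which of the two facts is also accurate.
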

	
	\begin{rem}
		The main result of \cite[\S5]{Sen19} is a description of the coefficient of the basis element indexed by $x=  w_I w_0$, for some cominuscule pair $(W,S,I)$, when $j_W$ is expressed in terms of the basis $\{ \pi(\delta_x) \vert x \in W_{FC}\}$. 
		Theorem \ref{Thm: gen JW} implies that these are the easiest coefficients to determine in the IC-basis, as the Schubert variety indexed by $ x w_0= w_I$ is smooth (it is a (co)minuscule flag variety), and the polynomial $\grrk \check{B}_{w_0} / \grrk \check{B}_{w_I}$ is the Poincar\'{e} polynomial of the flag variety. 
		It is noteworthy that this agrees with the coefficient determined in \cite[\S5]{Sen19}. 
	\end{rem}

\section*{Acknowledgements}

	This note is the proof of a result contained in my PhD thesis which was completed at the University of Sydney under the supervision of Geordie Williamson.
	I would like to thank Geordie Williamson for introducing me to the world of Kazhdan-Lusztig theory and Hecke categories, and providing many valuable comments on preliminary versions of this paper, and Gus Lehrer for originally introducing me to Temperley-Lieb algebras and Jones-Wenzl idempotents.  
	The author was supported by the award of a Research Training Program scholarship.

%==========================================================================

%============================== BIBLIOGRAPHY ==============================

%==========================================================================
{\small

}
%==========================================================================

\end{document}